\documentclass[fleqn,preprint,3p,a4paper]{elsarticle}
\usepackage{amsmath,amssymb,amsthm,mathtools}
\usepackage{lmodern}
\usepackage{enumitem}
\usepackage{dcolumn}
\usepackage{endnotes}
\usepackage{tabularx}
\usepackage[matrix,arrow]{xy}
\usepackage{wasysym}
\usepackage{graphicx}
\usepackage{microtype}
\usepackage[hidelinks]{hyperref}
\usepackage[T1]{fontenc}

\newtheorem{theorem}{Theorem}[section]
\newtheorem{proposition}[theorem]{Proposition}
\newtheorem{lemma}[theorem]{Lemma}
\newtheorem{corollary}[theorem]{Corollary}
\theoremstyle{remark}
\newtheorem{remark}[theorem]{Remark}

\newcommand{\OO}{\mathcal{O}}
\newcommand{\GG}{\Gamma}
\newcommand{\PH}{P_{H}}

\newcommand{\ua}{{\uparrow}}

\begin{document}

\begin{frontmatter}

\title{A countable metric space whose Hoare power space is not co-sober\tnoteref{t1}}
\tnotetext[t1]{This research was supported by the National Natural Science Foundation of China (Nos. 12471070, 12071199, 12561083).}

\author[mnu]{Xiaoquan Xu\corref{cor}}
\ead{xiqxu2002@163.com}

\cortext[cor]{Corresponding author}

\author[glut]{Wei Ji}
\ead{javeey@163.com}

\address[mnu]{School of Mathematics and Statistics, Minnan Normal University, Zhangzhou 363000, China}
\address[glut]{School of Mathematics and Statistics, Guilin University of Technology, Guilin 541004, China}

\begin{abstract}
Xu asked whether the Hoare power space of a co-sober space, and in particular of a $T_2$-space or a metric space, must be co-sober. We give a negative answer. He and Zhao recently constructed a countable $T_{1}$ space $X$ whose lattice of open sets has a non-sober Scott topology. We observe that their topology has a countable subbase. The classical Ponomarev construction then yields an open continuous surjection from a countable zero-dimensional metrizable space $M$ onto $X$. Direct and inverse images then make the Scott space $\Sigma\mathcal{O}(X)$ a retract of $\Sigma\mathcal{O}(M)$, so $\Sigma\mathcal{O}(M)$ is not sober. Finally, we show that, for every $T_{0}$-space $Y$, the Hoare power space $\PH(Y)$ is co-sober if and only if the Scott space $\Sigma\mathcal{O}(Y)$ is sober. Consequently, $\PH(M)$ is not co-sober. Therefore, the same example answers negatively a question of He and Zhao about Scott sobriety of the lattices of open sets of countable Hausdorff spaces.
\end{abstract}

\begin{keyword}
Sober space \sep co-sober space \sep Hausdorff space \sep countable metric space \sep Scott topology \sep Hoare power space
\MSC[2020] 54D10 \sep 54B20 \sep 54E35 \sep 06B35
\end{keyword}

\end{frontmatter}

\section{Introduction}

Co-sobriety was introduced by Escard\'o, Lawson and Simpson in connection with a dual form of the Hofmann--Mislove theorem \cite{ELS2004}. Sobriety and co-sobriety are not formal duals. Sober spaces need not be co-sober (see \cite[Example 2.1]{WenXu2018}). Escard\'o, Lawson and Simpson asked whether every sober space is co-sober \cite[Problem 9.7]{ELS2004}. In \cite{WenXu2018}, Wen and Xu gave a negative answer to this problem by proving that Isbell's lattice constructed in \cite{Isbell-1982} equipped with the lower topology is sober but not co-sober. Based on Wen and Xu's result, Shen et al. \cite{ShenWuXiZhao2020} further obtained a dcpo whose Scott topology is sober but not co-sober.

The Hoare power space is a very important structure in domain theory, which plays a fundamental role in modeling the semantics of non-deterministic programming languages (see \cite{Abramsky-Jung-1994, GHKLMS-2003, Schalk1993}). There naturally arises a question of which topological properties are preserved by the Hoare power construction.

For a $T_{0}$-space $X$, let $\GG_{0}(X)$ be the set of all nonempty closed subsets of $X$. The Hoare power space $\PH(X)$ is $\GG_{0}(X)$ equipped with the lower Vietoris topology, whose subbasic open sets are
\[
  \Diamond U=\{F\in\GG_{0}(X):F\cap U\neq\varnothing\},
  ~ U\in\OO(X).
\]
This construction is standard in domain theory and non-deterministic semantics; see, for example, \cite{Schalk1993}.

\renewcommand{\thefootnote}{\fnsymbol{footnote}}

In an invited academic presentation\footnote[2]{X. Xu, Some open problems on co-sober spaces,
an invited academic presentation at the 11th International Symposium on Domain Theory and its Applications, Chengdu, China, 4--8 August 2026.}, Xu asked whether the Hoare power space of a co-sober space, more specifically, whether the Hoare power space of a $T_2$-space or a metric space is co-sober. 

We will construct a countable zero-dimensional metrizable space $M$ for which $\PH(M)$ is not co-sober. Since every Hausdorff space is co-sober, this gives a negative answer to these three questions posed by Xu. The same construction also yields
\[
  \Sigma\OO(M)~\text{not sober},
\]
and therefore answers a recent question of He and Zhao concerning countable Hausdorff spaces \cite[Problem 7.3]{HeZhao2026}.

The proof has three parts. First, complementation identifies $\PH(X)$ with the lower topology on the proper open subsets of $X$. A theorem of Wen and Xu then relates co-sobriety of that lower topology to sobriety of $\Sigma\OO(X)$ \cite{WenXu2018}. Second, an open continuous surjection $f:M\to X$ makes $\Sigma\OO(X)$ a retract of $\Sigma\OO(M)$. Third, the Ponomarev construction can be made countable when $X$ is countable and second-countable. We apply these reductions to the space of He and Zhao. All three reductions are proved below; the two non-elementary inputs are the theorem of Wen and Xu for complete lattices and the countable $T_{1}$ example of He and Zhao.

\section{Preliminaries}

For a poset $P$ and $A \subseteq P$, define ${\ua} A=\{x\in P: a\leq x \mbox{ for some }a\in A \}$. For $x\in P$, let ${\ua} x={\ua}\{x\}$.
We use $\mathcal O(X)$ (resp., $\Gamma(X)$) to denote the set of all open subsets (resp., all closed subsets) of a topological space $X$. A nonempty subset $A$ of $X$, $A$ is \emph{irreducible} if for any two closed subsets  $F_1$ and $F_2$, $A \subseteq F_1\cup F_2$ implies $A \subseteq F_1$ or $A \subseteq  F_2$. The space $X$ is called \emph{sober}, if for every irreducible closed subset $F$ of $X$, there is a unique point $a\in X$ such that $F=\overline{\{a\}}$.

The following result is well-known (see, e.g., \cite[Exercise O-5.16(vi)]{GHKLMS-2003}).

\begin{lemma}\label{lem:retract-sober}
Every retract of a sober space is sober.
\end{lemma}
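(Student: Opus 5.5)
Let $X$ be sober, $Y$ a topological space, and $s:Y\to X$, $r:X\to Y$ continuous maps with $r\circ s=\mathrm{id}_Y$. We must show that $Y$ is sober. The plan is to push an irreducible closed set of $Y$ forward along $s$, realize it as a point closure in $X$ using sobriety of $X$, and then pull that point back along $r$. Uniqueness will come from the $T_0$ property, which $Y$ inherits from $X$.

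First, $Y$ is $T_0$. Suppose $\overline{\{y_1\}}=\overline{\{y_2\}}$ in $Y$. By continuity of $s$, $\overline{\{s(y_1)\}}=\overline{s(\overline{\{y_1\}})}=\overline{s(\overline{\{y_2\}})}=\overline{\{s(y_2)\}}$. Since $X$ is sober, hence $T_0$, this gives $s(y_1)=s(y_2)$, and applying $r$ yields $y_1=y_2$. So any point whose closure is a given irreducible closed set is unique, and only existence remains.

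Existence. Let $A\subseteq Y$ be irreducible closed. Continuous images of irreducible sets are irreducible, and closures of irreducible sets are irreducible. Hence $\overline{s(A)}$ is an irreducible closed subset of $X$, and sobriety of $X$ gives $x\in X$ with $\overline{s(A)}=\overline{\{x\}}$. Put $y=r(x)$; we show $A=\overline{\{y\}}$.
\begin{itemize}
\item For $A\subseteq\overline{\{y\}}$: every $a\in A$ satisfies $s(a)\in\overline{\{x\}}$. Continuity of $r$ gives $a=r(s(a))\in r(\overline{\{x\}})\subseteq\overline{\{r(x)\}}=\overline{\{y\}}$.
\item For $\overline{\{y\}}\subseteq A$: since $x\in\overline{s(A)}$, we get $y=r(x)\in r(\overline{s(A)})\subseteq\overline{r(s(A))}=\overline{A}=A$, and $A$ is closed.
\end{itemize}
Thus $A=\overline{\{y\}}$.

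The only point needing care is the repeated use of the inclusion $f(\overline{B})\subseteq\overline{f(B)}$ for a continuous map $f$, together with $r\circ s=\mathrm{id}$; there is no real obstacle. If a retract is instead taken to mean a subspace $Y\subseteq X$ with a continuous retraction $r:X\to Y$, the same argument applies with $s$ the inclusion map.
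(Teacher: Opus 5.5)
Your proof is correct. Every step checks out: the transfer of $T_0$ through $s$, the irreducibility of $\overline{s(A)}$, and the two inclusions obtained from $f(\overline{B})\subseteq\overline{f(B)}$ together with $r\circ s=\mathrm{id}_Y$.

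The paper gives no proof of this lemma. It cites it as well known, with a pointer to \cite[Exercise O-5.16(vi)]{GHKLMS-2003}, so there is no in-paper argument to match. The natural comparison is with the standard categorical argument.

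\textbf{Categorical argument.} Extend $s$ along the sobrification unit $\eta_Y:Y\to Y^{s}$ to $\hat s:Y^{s}\to X$, so that $\hat s\circ\eta_Y=s$. Then
\[
  \eta_Y\circ r\circ\hat s\circ\eta_Y=\eta_Y\circ r\circ s=\eta_Y .
\]
Uniqueness of extensions into the sober space $Y^{s}$ forces $\eta_Y\circ r\circ\hat s=\mathrm{id}_{Y^{s}}$. Since also $(r\circ\hat s)\circ\eta_Y=\mathrm{id}_Y$, the map $\eta_Y$ is a homeomorphism and $Y$ is sober. Equivalently, $Y$ is the equalizer of $\mathrm{id}_X$ and $s\circ r$, and sober spaces, being reflective, are closed under limits in the category of spaces.

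\textbf{Comparison.} The categorical argument is shorter once sobrification is available, and it works verbatim for any reflective subcategory. Your point-set argument needs nothing beyond the definition of sobriety and is fully self-contained.

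Your section--retraction formulation is also exactly how the lemma is used in Lemma~\ref{lem:open-retract}. There $s(U)=f^{-1}(U)$ is not literally a subspace inclusion of $\OO(X)$ into $\OO(M)$. So your version applies directly, with no need to first identify $s(\OO(X))$ with a subspace.
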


For a $T_0$-space $Y$, let $\leq_Y$ denote the \emph{specialization order} of $Y$: $x\leq_Y y$ if{}f $x\in \overline{\{y\}}$. A subset $A$ of $X$ is called \emph{saturated} if $A$ equals the intersection of all open sets containing it (equivalently, $A$ is an upper set in the specialization order). A nonempty compact saturated subset of $X$ is called $k$-\emph{irreducible} if it cannot be written as the union of two proper compact saturated subsets. The space is \emph{co}-\emph{sober} when every $k$-irreducible compact saturated subset is the saturation of a unique point.

\begin{lemma}\label{lem:T2-co-sober} Every $T_2$-space is co-sober.
\end{lemma}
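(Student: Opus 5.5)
Let $X$ be a $T_2$-space and let $K$ be a $k$-irreducible compact saturated subset of $X$. We must show that $K={\ua}x$ for a unique point $x$. Since $X$ is $T_1$, the specialization order is discrete. Every subset is therefore saturated, and ${\ua}x=\{x\}$. Uniqueness is then immediate, because ${\ua}x={\ua}y$ forces $x=y$. So the whole task is to show that $K$ is a singleton.

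Suppose, towards a contradiction, that $K$ contains two distinct points $a\neq b$. Using the Hausdorff property, we choose disjoint open sets $U\ni a$ and $V\ni b$. We then set
\[
K_1=K\setminus U,\qquad K_2=K\setminus V .
\]
Each $K_i$ is a closed subset of the compact set $K$, so it is compact. Each is also saturated, since every subset of a $T_1$-space is saturated.

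Next we check that these two sets give a forbidden decomposition of $K$. Because $U\cap V=\varnothing$, we have $K_1\cup K_2=K\setminus(U\cap V)=K$. Moreover, $a\notin K_1$ and $b\notin K_2$, so both $K_1$ and $K_2$ are proper subsets of $K$. If one of them is empty, say $K_1=\varnothing$, the equality $K_1\cup K_2=K$ would give $K=K_2\subsetneq K$, which is impossible. Hence both $K_i$ are nonempty proper compact saturated subsets whose union is $K$. This contradicts the $k$-irreducibility of $K$.

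It follows that $K$ has at most one point. Since $K$ is nonempty by definition, $K=\{x\}={\ua}x$ for some $x$, and $X$ is co-sober. There is no real obstacle in this argument. The only points needing care are that closed subsets of compact sets are compact (no Hausdorffness is needed for this) and that saturation is automatic in a $T_1$-space.
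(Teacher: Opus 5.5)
Your proof is correct and is the same argument as the paper's: separate two distinct points of $K$ by disjoint open sets $U,V$, and write $K=(K\setminus U)\cup(K\setminus V)$ as a union of two proper compact subsets, which are automatically saturated because the space is $T_1$. The only additions are your explicit checks that both pieces are nonempty and that the representing point is unique, details the paper leaves implicit.
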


\begin{proof}
Let $K$ be a $k$-irreducible compact subset of a Hausdorff space $X$. If $x,y\in K$ are distinct, choose disjoint open sets $U$ and $V$ with $x\in U$ and $y\in V$. Then
\[
  K=(K\setminus U)\cup(K\setminus V).
\]
Both sets on the right are proper compact subsets of $K$, and all subsets are saturated because the space is $T_{1}$. This contradicts $k$-irreducibility. Thus $K$ is a singleton, which is the saturation of its unique point.
\end{proof}

\begin{lemma}[Wen--Xu \cite{WenXu2018}]\label{lem:ssturate-closed-co-sober}
Every saturated subspace of a co-sober space is co-sober.
\end{lemma}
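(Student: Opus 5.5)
Let $X$ be co-sober and $A\subseteq X$ a saturated subspace, i.e.\ an upper set of $X$ in the specialization order. I would pass back and forth between compact saturated sets of the subspace $A$ and those of $X$, and then apply co-sobriety of $X$.

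First I would record two facts. The specialization order of $A$ is the restriction of $\leq_X$, because closures in $A$ are traces of closures in $X$. Compactness is intrinsic, so a subset of $A$ is compact in $A$ if and only if it is compact in $X$.

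The key step is to show that a subset $K\subseteq A$ is saturated in $A$ if and only if it is saturated in $X$. If $K$ is an upper set of $(X,\leq_X)$ contained in $A$, then it is an upper set for the restricted order, which gives one direction. Conversely, let $K$ be an upper set in $A$, and take $k\in K$ and $x\in X$ with $k\leq_X x$. Because $A$ is an upper set of $X$ and $k\in A$, we get $x\in A$. Then $k\leq_A x$, so $x\in K$. Hence the compact saturated subsets of $A$ are exactly the compact saturated subsets of $X$ that lie inside $A$.

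Now take a $k$-irreducible compact saturated $K\subseteq A$. By the key step, $K$ is compact saturated in $X$. Suppose $K=K_1\cup K_2$ with $K_1,K_2$ proper compact saturated subsets in $X$. Each $K_i\subseteq K\subseteq A$, so each is compact saturated in $A$, which contradicts $k$-irreducibility in $A$. So $K$ is $k$-irreducible in $X$. By co-sobriety of $X$ there is a unique $a\in X$ with $K={\ua}a$. Since $a\in K\subseteq A$, this gives $K={\ua}_A a$. For uniqueness, suppose $K={\ua}_A b$ with $b\in A$. Then ${\ua}_X b\subseteq A$, so ${\ua}_X b={\ua}_A b=K$, and uniqueness in $X$ forces $b=a$. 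Alternatively, uniqueness follows from $T_0$.

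The only step that needs care is showing that saturation in $A$ coincides with saturation in $X$. This is exactly where the hypothesis that $A$ is an upper set is used; for arbitrary subspaces the equivalence fails.
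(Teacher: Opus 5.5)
Your argument is correct and complete. The paper gives no proof of its own here; it simply quotes the lemma from Wen--Xu, and your direct verification is the standard argument. Its key point is that, for an upper set $A$, the compact saturated subsets of $A$ are exactly the compact saturated subsets of $X$ contained in $A$. The paper uses this same fact (``because $L^{-}$ is saturated'') in the converse half of Lemma~\ref{lem:delete-top}.
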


A nonempty subset $D$ of a poset $Q$ is \emph{directed} if every two
elements in $D$ have an upper bound in $D$. The set of all directed sets of $Q$ is denoted by $\mathcal D(Q)$. A subset $U$ of $Q$ is \emph{Scott open} if (i) $U=\mathord{\uparrow}U$, and (ii) for any directed subset $D$ for
which $\vee D$ exists, $\vee D\in U$ implies $D\cap
U\neq\emptyset$. All Scott open subsets of $Q$ form a topology,
and we call this topology  the \emph{Scott topology} on $Q$ and
denote it by $\sigma(Q)$. The space $\Sigma~\!\! Q=(Q,\sigma(Q))$ is called the
\emph{Scott space} of $Q$. The \emph{lower topology} on $Q$, generated
by $\{Q\setminus {\uparrow}x : x\in Q\}$ (as a subsase), is denoted by $\omega (Q)$. 

\section{The Hoare power space and the lattice of open sets}

We use the following theorem.

\begin{theorem}[Wen--Xu \cite{WenXu2018}]\label{thm:complete-lattice}
For every complete lattice $L$, the Scott space $\Sigma L$ is sober if and only if $(L,\omega(L))$ is co-sober.
\end{theorem}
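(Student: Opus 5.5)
We prove both implications of Theorem~\ref{thm:complete-lattice} by relating directed sets in $L$ to compact saturated sets of $(L,\omega(L))$. The specialization order of $\omega(L)$ is the \emph{reverse} of the order of $L$: $x\in\overline{\{y\}}$ iff $y\le x$, since closed sets of $\omega(L)$ are intersections of finite unions of principal up-sets ${\ua}x$. Hence the saturated sets of $(L,\omega(L))$ are the lower sets of $L$, and the saturation of a point $x$ is ${\downarrow}x$. The plan is to set up a bijection between irreducible Scott-closed sets of $\Sigma L$ and $k$-irreducible compact saturated sets of $(L,\omega(L))$, under which closures of points correspond to principal down-sets. Sobriety then translates into co-sobriety.

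\textbf{Step 1: compact saturated sets.} We show that a lower set $K\subseteq L$ is $\omega$-compact iff every family of principal up-sets ${\ua}x_i$ whose finite subfamilies all meet $K$ has intersection meeting $K$. Since $\bigcap_i{\ua}x_i={\ua}\bigvee_i x_i$ in a complete lattice and $K$ is a lower set, this reduces, via the Alexander subbase lemma, to a finite-join condition. Namely, $K$ is compact iff for every subset $A$ with $\bigvee F\in K$ for all finite $F\subseteq A$, we have $\bigvee A\in K$. Writing $\bigvee A$ as the join of the directed set of finite joins, this says: $K$ is a lower set closed under existing joins of directed subsets that it contains finite-join-wise. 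In particular every Scott-closed set $C$ that is also closed under finite joins of its elements is compact saturated. Conversely, every compact saturated $K$ is Scott-closed, because a directed $D\subseteq K$ has all finite joins bounded by elements of $K$.

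\textbf{Step 2: matching irreducibility.} Given an irreducible Scott-closed $C$, we show that $C$ is directed in $L$. If $a,b\in C$ had no upper bound in $C$, then $C\subseteq (C\setminus{\ua}a)^{-}\cup(C\setminus{\ua}b)^{-}$ would give a decomposition, after a check that the Scott closures stay inside the right pieces. We expect this check to need care. Alternatively, we use the standard fact that irreducibility of $C$ makes $C\cap{\ua}a\cap{\ua}b$ nonempty. A directed Scott-closed set is then closed under finite joins, hence compact saturated by Step~1. Conversely, a $k$-irreducible $K$ is compact saturated, hence Scott-closed. If $K=C_1\cup C_2$ with proper Scott-closed $C_i$, we want proper compact saturated pieces. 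For this we show that $k$-irreducibility forces $K$ to be directed: for $a,b\in K$ with $a\vee b\notin K$, the sets $K\setminus{\ua}a$ and $K\setminus{\ua}b$ are $\omega$-closed subsets of a compact set, hence compact. They are lower sets, hence saturated, and they are proper and cover $K$, a contradiction. A directed Scott-closed set is irreducible, since a Scott-closed set containing an upper bound of any two of its elements cannot split. So the two classes coincide: both are exactly the directed Scott-closed subsets, i.e.\ the ideals closed under directed joins.

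\textbf{Step 3: conclusion.} An ideal $I$ closed under directed joins contains $\bigvee I$, so $I={\downarrow}\bigvee I$. Thus both conditions hold automatically whenever the correspondence of Step~2 is right. This seems too strong, so the honest main obstacle is Step~2: the irreducible Scott-closed sets need not be directed, and that is where non-sobriety lives. We will therefore redo Step~2 more carefully. The correct correspondence should send an irreducible Scott-closed $C$ to itself, viewed as an $\omega$-compact lower set; we need to verify that compactness holds and that $k$-irreducibility in $\omega$ is equivalent to irreducibility in $\sigma$, using $K\setminus{\ua}a$-type decompositions in one direction and Scott closures in the other. Uniqueness of the representing points follows from antisymmetry in both settings.
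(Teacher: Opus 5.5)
The paper does not prove this theorem; it cites Wen--Xu. The standard argument rests on one observation: for a complete lattice $L$, the compact saturated subsets of $(L,\omega(L))$ are exactly the Scott-closed subsets of $L$. Your Step~1 derives the right Alexander-subbase criterion but then misreads it. The clause ``$\bigvee F\in K$ for all finite $F\subseteq A$'' is a hypothesis on $A$, not a requirement that $K$ be closed under finite joins. Indeed, if $C$ is Scott-closed and all finite joins of $A$ lie in $C$, those finite joins form a directed subset of $C$ with supremum $\bigvee A$, so $\bigvee A\in C$. Thus every Scott-closed set is $\omega$-compact, and with your correct converse the two families coincide. Because you stop at ``Scott-closed and closed under finite joins'', you never show that an irreducible Scott-closed set is $\omega$-compact. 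Nor do you show that the two proper Scott-closed pieces of a decomposition of a $k$-irreducible $K$ are $\omega$-compact. These are exactly the points your Step~3 leaves ``to be verified'', so neither implication is established.

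The detour in Step~2 is not just delicate but false. Irreducible Scott-closed sets need not be directed. A directed Scott-closed $C$ contains $\bigvee C$ and so equals ${\downarrow}\bigvee C$; if all irreducible Scott-closed sets were directed, every $\Sigma L$ would be sober, contradicting Isbell's lattice. The ``standard fact'' $C\cap{\uparrow}a\cap{\uparrow}b\neq\varnothing$ would need ${\uparrow}a$ and ${\uparrow}b$ to be Scott open. Likewise $K\setminus{\uparrow}a$ is not $\omega$-closed: ${\uparrow}a$ is a subbasic \emph{closed} set of $\omega(L)$, so $K\setminus{\uparrow}a$ is relatively open in $K$ and need not be compact. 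For example, with $L=K=[0,1]$ and $a=\tfrac12$, the set $K\setminus{\uparrow}a=[0,\tfrac12)$ is not Scott-closed. Had that argument worked, it would show $(L,\omega(L))$ is always co-sober, which is false. Once the two families are known to coincide, no directedness is needed. Both $k$-irreducibility and irreducibility say ``nonempty and not the union of two proper members of the family'' (for closed $C$, $C\subseteq F_1\cup F_2$ gives $C=(C\cap F_1)\cup(C\cap F_2)$). Moreover, the $\omega$-saturation of $x$ and the Scott closure of $\{x\}$ are both ${\downarrow}x$. So co-sobriety of $(L,\omega(L))$ and sobriety of $\Sigma L$ are literally the same condition, with uniqueness coming from $T_0$.
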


We first note that deleting the greatest element does not affect co-sobriety of the lower topology.

\begin{lemma}\label{lem:delete-top}
Let $L$ be a complete lattice with greatest element $1$, and put $L^{-}=L\setminus\{1\}$. Then
\[
  (L,\omega(L))\text{ is co-sober}
  ~\Longleftrightarrow~
  (L^{-},\omega(L^{-}))\text{ is co-sober}.
\]
\end{lemma}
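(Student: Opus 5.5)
The plan is to compare the two spaces directly: first I show that $L^{-}$, with the topology $\omega(L^{-})$, is a saturated subspace of $(L,\omega(L))$, and then I treat the compact saturated sets containing $1$ separately. The whole argument rests on identifying the specialization order of the lower topology. The closed subbasic sets of $\omega(L)$ are the sets ${\ua}x$, so $\overline{\{y\}}={\ua}y$. Hence $x\leq_{\omega} y$ iff $y\leq x$, i.e. the specialization order is the reverse of the order of $L$. Consequently the saturated sets of $(L,\omega(L))$ are exactly the lower sets of $L$, and the saturation of a point $a$ is ${\downarrow}a$. The same holds for $L^{-}$ with its own lower topology.

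\emph{Step 1: the subspace identification.} The set $L^{-}=L\setminus{\ua}1$ is a subbasic open set of $\omega(L)$ and a lower set, hence saturated. The subspace topology on $L^{-}$ has subbasis $(L\setminus{\ua}x)\cap L^{-}$. For $x\in L^{-}$ this set equals $L^{-}\setminus{\ua}x$, where ${\ua}x$ is now computed in $L^{-}$; for $x=1$ it is $L^{-}$ itself. So the subspace topology coincides with $\omega(L^{-})$. The implication ``$\Rightarrow$'' then follows at once from Lemma \ref{lem:ssturate-closed-co-sober}.

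\emph{Step 2: the converse.} Assume $(L^{-},\omega(L^{-}))$ is co-sober, and let $K$ be a $k$-irreducible compact saturated subset of $(L,\omega(L))$. Then $K$ is a nonempty lower set, and I split into two cases.

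\begin{itemize}
\item If $1\in K$, then $K=L={\downarrow}1$, which is the saturation of $1$. This point is unique, because ${\downarrow}a=L$ forces $a=1$.
\item If $1\notin K$, then $K\subseteq L^{-}$. By Step 1, compactness of $K$ is the same in $L$ and in $L^{-}$, and so is being saturated, since both mean being a lower set. The compact saturated subsets of $K$ are therefore the same whether computed in $L$ or in $L^{-}$. Hence $K$ is $k$-irreducible in $L^{-}$, and so $K={\downarrow}a$ for a unique $a\in L^{-}$. This $a$ is also unique in $L$, since $a=1$ would give $K=L\ni 1$.
\end{itemize}

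I expect no real obstacle here. The one point needing care is the bookkeeping in Step 1: one has to check that compactness, saturation and $k$-irreducibility all transfer between $L$ and $L^{-}$, and this works precisely because $L^{-}$ is open and saturated in $L$ and carries exactly the subspace topology.
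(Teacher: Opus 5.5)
Your proposal is correct and follows essentially the same route as the paper. You identify $\omega(L^{-})$ with the subspace topology and use that $L^{-}$ is saturated (the specialization order being the reverse lattice order) to get ``$\Rightarrow$'' from Lemma~\ref{lem:ssturate-closed-co-sober}, and for ``$\Leftarrow$'' you split according to whether $1\in K$. Your version is, if anything, slightly more explicit than the paper's, for example in the subbase check for $x=1$ and in verifying uniqueness of the representing point in both cases.
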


\begin{proof}
Assume that $(L,\omega(L))$ is co-sober. The equality $\omega(L^{-})=\omega(L)|_{L^{-}}$ follows directly from the two subbases. Since the specialization order of $\omega(L)$ is the reverse lattice order, $L^{-}$ is a saturated subspace of $(L,\omega(L))$, and hence by Lemma \ref{lem:ssturate-closed-co-sober}, $(L^{-},\omega(L^{-}))$ is co-sober.

Conversely, suppose that $(L^{-},\omega(L^{-}))$ is co-sober, and let $K$ be a $k$-irreducible compact saturated subset of $(L,\omega(L))$. If $1\in K$, then $K$ is a lower set in the lattice order, and therefore
\[
  K=L=\mathord{\downarrow}_{L}1,
\]
which is the saturation of $1$ in the lower topology. If $1\notin K$, then $K\subseteq L^{-}$. Compactness and saturation pass to the subspace $L^{-}$. Moreover, a compact saturated decomposition of $K$ in $L^{-}$ is also one in $L$, because $L^{-}$ is saturated. Thus $K$ is $k$-irreducible in $L^{-}$ and
\[
  K=\mathord{\downarrow}_{L^{-}}x
\]
for some $x\in L^{-}$. The representing point is unique since the lower topology of a poset is $T_0$. As $x\neq 1$, this set equals $\mathord{\downarrow}_{L}x$. Hence $K$ is principal in $(L,\omega(L))$.
\end{proof}

\begin{theorem}\label{theor:bridge}
For every $T_{0}$-space $X$,
\[
  \PH(X)\text{ is co-sober}
  ~\Longleftrightarrow~
  \Sigma\OO(X)\text{ is sober}.
\]
\end{theorem}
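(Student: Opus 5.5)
The plan is to chain three identifications: a homeomorphism between $\PH(X)$ and the lower topology on proper open sets, then Lemma \ref{lem:delete-top}, then Theorem \ref{thm:complete-lattice}. Let $L=\OO(X)$, a complete lattice with top $X$, and put $L^{-}=\OO(X)\setminus\{X\}$, the set of proper open subsets.

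Define $c:\GG_{0}(X)\to L^{-}$ by $c(F)=X\setminus F$. This is a bijection, because complements of nonempty closed sets are exactly the proper open sets. It is order-reversing: $F_1\subseteq F_2$ iff $c(F_1)\supseteq c(F_2)$. The next step is to compare subbases. For $U\in\OO(X)$ we have $F\cap U\neq\varnothing$ iff $U\not\subseteq X\setminus F$. Hence
\[
  c(\Diamond U)=\{V\in L^{-}: U\not\subseteq V\}=L^{-}\setminus{\ua}_{L^{-}}U
\]
whenever $U\in L^{-}$. If $U=X$, then $\Diamond X=\GG_{0}(X)$ corresponds to all of $L^{-}$, which is open anyway. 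Thus $c$ maps the subbase of the lower Vietoris topology onto the subbase $\{L^{-}\setminus{\ua}x: x\in L^{-}\}$ of $\omega(L^{-})$, up to the whole space. Therefore $c$ is a homeomorphism $\PH(X)\cong(L^{-},\omega(L^{-}))$. The one point to check is that the generating family of $\omega(L^{-})$ is indexed by elements of $L^{-}$ itself, and that the case $U=X$ adds nothing.

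Since co-sobriety is a topological invariant, $\PH(X)$ is co-sober iff $(L^{-},\omega(L^{-}))$ is co-sober. By Lemma \ref{lem:delete-top}, this holds iff $(L,\omega(L))$ is co-sober. Since $\OO(X)$ is a complete lattice, Theorem \ref{thm:complete-lattice} then gives that this holds iff $\Sigma\OO(X)$ is sober, which proves the theorem.

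I expect no real obstacle, since the substantive content sits in Theorem \ref{thm:complete-lattice} and Lemma \ref{lem:delete-top}. The only care needed is the bookkeeping in the subbase correspondence, namely the $U=X$ case and the exclusion of the empty closed set, which is exactly why $X$ itself is removed and Lemma \ref{lem:delete-top} is needed. The $T_0$ hypothesis plays no role beyond matching the setting of the definitions.
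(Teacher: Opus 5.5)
Your proposal is correct and follows the paper's proof essentially step for step. The paper also shows that complementation $F\mapsto X\setminus F$ is a homeomorphism $\PH(X)\cong(\OO^{-}(X),\omega(\OO^{-}(X)))$ by matching $\Diamond U$ with $\OO^{-}(X)\setminus{\ua}U$ (with $\Diamond X$ the whole space), and then chains Lemma~\ref{lem:delete-top} and Theorem~\ref{thm:complete-lattice} exactly as you do.
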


\begin{proof}
Let $\OO^{-}(X)=\OO(X)\setminus\{X\}$. Complementation gives a bijection
\[
  \varphi:\GG_{0}(X)\longrightarrow\OO^{-}(X),
  ~ \varphi(A)=X\setminus A.
\]
For every proper open set $U$,
$$\begin{array}{lll}
	\varphi(\Diamond U)& =\varphi(\{A\in\GG_0 (X) : A\cap U\neq\emptyset\})\\
	           & =\{X\setminus A : A\in\GG_0 (X), U\not\subseteq X\setminus A\}\\
	           & =\{V\in\OO^{-}(X):U\nsubseteq V\}\\
	           & =\OO^{-}(X)\setminus\mathord{\uparrow}U.\\
\end{array}$$
The subbasic set $\Diamond X$ is the whole space. It follows that $\varphi$ is a homeomorphism from $\PH(X)$ onto $(\OO^{-}(X),\omega(\OO^{-}(X)))$.

The lattice $\OO(X)$ is complete. Lemma~\ref{lem:delete-top}, followed by Theorem~\ref{thm:complete-lattice}, gives
\begin{align*}
  \PH(X)\text{ co-sober}
  &\Longleftrightarrow
  (\OO^{-}(X),\omega(\OO^{-}(X)))\text{ co-sober}\\
  &\Longleftrightarrow
  (\OO(X),\omega(\OO(X)))\text{ co-sober}\\
  &\Longleftrightarrow
  \Sigma\OO(X)\text{ sober}.
\end{align*}
\end{proof}

\begin{remark}
Theorem~\ref{theor:bridge} is a transfer statement. Its substantive input is Theorem~\ref{thm:complete-lattice}; complementation and Lemma~\ref{lem:delete-top} put that theorem into the form needed for Hoare power spaces.
\end{remark}

\section{Open maps and a countable Ponomarev construction}

We next transfer non-sobriety of lattices of open sets along open surjections.

\begin{lemma}\label{lem:open-retract}
Let $f:M\to X$ be an open continuous surjection. Then $\Sigma\OO(X)$ is a retract of $\Sigma\OO(M)$.
\end{lemma}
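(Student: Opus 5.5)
We define two maps and check that both are Scott continuous:
\[
  r:\OO(M)\to\OO(X),\quad r(W)=f(W),
  \qquad
  s:\OO(X)\to\OO(M),\quad s(U)=f^{-1}(U).
\]
The map $r$ is well defined because $f$ is open, and $s$ is well defined because $f$ is continuous. Since $f$ is surjective, $f(f^{-1}(U))=U$ for every $U\subseteq X$. Hence $r\circ s=\mathrm{id}_{\OO(X)}$, and $s$ will be a section of $r$ once continuity is established.

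For the Scott topologies, we use the standard fact that a map between posets is continuous with respect to the Scott topologies exactly when it is monotone and preserves existing directed suprema. It is enough to prove this for directed families, since a Scott continuous map is then continuous $\Sigma\OO(M)\to\Sigma\OO(X)$ and vice versa. In the complete lattices $\OO(M)$ and $\OO(X)$, the supremum of any family of open sets is its union. Both $r$ and $s$ are clearly monotone. Direct images commute with arbitrary unions:
\[
  f\Bigl(\bigcup_i W_i\Bigr)=\bigcup_i f(W_i).
\]
So $r$ preserves all joins, in particular directed ones. Inverse images also commute with arbitrary unions:
\[
  f^{-1}\Bigl(\bigcup_i U_i\Bigr)=\bigcup_i f^{-1}(U_i).
\]
So $s$ preserves all joins as well.

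It follows that $r$ and $s$ are continuous between the Scott spaces and $r\circ s=\mathrm{id}$. Hence $\Sigma\OO(X)$ is a retract of $\Sigma\OO(M)$, via the embedding $s$ and the retraction $r$.

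There is no substantial obstacle. The only step needing care is the equivalence between Scott continuity and preservation of directed suprema, together with the fact that joins in $\OO(\cdot)$ are unions; this is what lets us avoid checking Scott open sets directly. Openness of $f$ is used only to ensure that $r$ takes values in $\OO(X)$, and surjectivity is used only for $r\circ s=\mathrm{id}$.
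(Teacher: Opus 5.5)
Your proof is correct and is essentially the paper's argument: the same maps $r(W)=f(W)$ and $s(U)=f^{-1}(U)$, with Scott continuity coming from preservation of arbitrary unions (hence directed joins) and $r\circ s=\mathrm{id}_{\OO(X)}$ coming from surjectivity. One small point of wording: your sentence beginning ``It is enough to prove this for directed families'' is muddled, but nothing in the argument depends on it.
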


\begin{proof}
Define
\[
  s:\OO(X)\to\OO(M),~ s(U)=f^{-1}(U),
\]
and
\[
  r:\OO(M)\to\OO(X),~ r(V)=f(V).
\]
The second map is well defined because $f$ is open. Both maps preserve arbitrary unions, hence directed suprema, and are Scott continuous. Surjectivity of $f$ gives
\[
  r(s(U))=f(f^{-1}(U))=U
\]
for every $U\in\OO(X)$. Therefore $r\circ s=id_{\OO(X)}$.
\end{proof}

By Lemma \ref{lem:retract-sober} and Lemma \ref{lem:open-retract}, we get the following.

\begin{corollary}\label{cor:transfer-nonsober}
Under the hypotheses of Lemma~\ref{lem:open-retract}, if $\Sigma\OO(X)$ is not sober, then $\Sigma\OO(M)$ is not sober.
\end{corollary}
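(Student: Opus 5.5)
The corollary is the contrapositive of the statement that sobriety of $\Sigma\OO(M)$ forces sobriety of $\Sigma\OO(X)$. I will therefore assume that $\Sigma\OO(M)$ is sober and derive that $\Sigma\OO(X)$ is sober.

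First, I apply Lemma~\ref{lem:open-retract} to the given open continuous surjection $f:M\to X$. This produces the Scott continuous maps $s(U)=f^{-1}(U)$ and $r(V)=f(V)$, with $r\circ s=id_{\OO(X)}$. I then record that continuity with respect to the Scott topologies is legitimate. Both maps preserve arbitrary unions, and joins in $\OO(X)$ and $\OO(M)$ are unions. Hence both maps are monotone and preserve directed suprema, which is exactly Scott continuity between the complete lattices $\OO(X)$ and $\OO(M)$. Consequently $s$ and $r$ are continuous as maps between the Scott spaces $\Sigma\OO(X)$ and $\Sigma\OO(M)$. This exhibits $\Sigma\OO(X)$ as a topological retract of $\Sigma\OO(M)$, with section $s$ and retraction $r$.

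Second, I invoke Lemma~\ref{lem:retract-sober}. Since $\Sigma\OO(M)$ is sober, its retract $\Sigma\OO(X)$ is sober. This contradicts the hypothesis that $\Sigma\OO(X)$ is not sober, so $\Sigma\OO(M)$ cannot be sober.

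There is no real obstacle here. The only point needing care is that the retraction in Lemma~\ref{lem:open-retract} is a retraction in the category of topological spaces for the Scott topologies, and not merely an order-theoretic one. That is guaranteed by the Scott-continuity observation in the first step.
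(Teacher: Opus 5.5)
Your proposal is correct and follows the paper's argument exactly. The paper obtains the corollary in one line by combining Lemma~\ref{lem:open-retract} (whose proof already includes the Scott-continuity check you repeat) with Lemma~\ref{lem:retract-sober}, and your contrapositive is that same combination.
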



Hanai and Ponomarev characterized first-countable $T_{1}$-spaces as open continuous images of metrizable spaces; see \cite[Theorem~2.1]{Abernethy1974}. For the present application, the standard construction can be chosen countable.

\begin{theorem}\label{theor:countable-ponomarev}
Every countable second-countable $T_{1}$-space is an open continuous image of a countable zero-dimensional metrizable space.
\end{theorem}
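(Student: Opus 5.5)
We will use the classical Ponomarev construction and check that, under the countability hypotheses, the resulting space can be taken countable. Let $X$ be a countable second-countable $T_{1}$-space, and fix a countable base $\mathcal B=\{B_n:n\in\mathbb N\}$ of $X$; enlarging it if necessary, we may assume $X\in\mathcal B$, so that every point lies in some member. Give $\mathbb N$ the discrete topology and $\mathbb N^{\mathbb N}$ the product (Baire) topology, which is zero-dimensional and metrizable. Define
\[
  M=\{\alpha\in\mathbb N^{\mathbb N}:\{B_{\alpha(i)}:i\in\mathbb N\}\text{ is a neighbourhood base at some point }x_\alpha\in X\},
\]
and set $f(\alpha)=x_\alpha$. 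The point $x_\alpha$ is unique because $X$ is $T_{1}$: two distinct points $x,y$ cannot share a neighbourhood base, since some open set contains $x$ but not $y$. Thus $f$ is well defined. As a subspace of the Baire space, $M$ is zero-dimensional and metrizable.

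\textbf{Surjectivity and continuity.} For $x\in X$, the members of $\mathcal B$ containing $x$ form a countable neighbourhood base at $x$. Any enumeration of their indices, with repetitions allowed if the family is finite, gives an $\alpha\in M$ with $f(\alpha)=x$. For continuity, let $U$ be an open neighbourhood of $f(\alpha)$. Choose $i$ with $B_{\alpha(i)}\subseteq U$. Every $\beta\in M$ with $\beta(i)=\alpha(i)$ satisfies $f(\beta)\in B_{\beta(i)}\subseteq U$, since $f(\beta)$ lies in all members of its base.

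\textbf{Openness.} The basic open sets of $M$ have the form $N(a_0,\dots,a_k)=\{\beta\in M:\beta(i)=a_i,\ i\le k\}$. We claim that
\[
  f(N(a_0,\dots,a_k))=\bigcap_{i\le k}B_{a_i},
\]
which is open. The inclusion $\subseteq$ is clear. For $\supseteq$, take $x$ in the intersection, and let $\gamma$ enumerate the indices of the basic sets containing $x$. The sequence $(a_0,\dots,a_k,\gamma(0),\gamma(1),\dots)$ is still a neighbourhood base at $x$ (all its terms contain $x$), so it lies in $N(a_0,\dots,a_k)$ and maps to $x$.

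\textbf{Countability.} This is the main obstacle, because the $M$ above is in general uncountable: each fibre contains every enumeration of the base at that point. We will therefore shrink $M$ while keeping surjectivity, continuity and openness. For each $x\in X$, fix one enumeration $\gamma_x$ of the indices of basic sets containing $x$. Let $M_0\subseteq M$ consist of all sequences of the form $(a_0,\dots,a_k)^\frown\gamma_x$, where $x\in X$, the tuple $(a_0,\dots,a_k)$ is a finite sequence of indices with $x\in\bigcap_{i\le k}B_{a_i}$, and $k\ge -1$ (the empty prefix being allowed). Since $X$ is countable and there are only countably many finite tuples, $M_0$ is countable. It is a subspace of $M$, hence zero-dimensional and metrizable. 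The restriction $f|_{M_0}$ is continuous, and it is surjective because $\gamma_x\in M_0$ for every $x$. The openness argument above used exactly the sequences of the form $(a_0,\dots,a_k)^\frown\gamma_x$, and these lie in $M_0$. Therefore $f(N(a_0,\dots,a_k)\cap M_0)=\bigcap_{i\le k}B_{a_i}$, which shows that $f|_{M_0}$ is open. Taking $M_0$ as the required space completes the proof.
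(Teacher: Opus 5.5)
Your proof is correct and is essentially the paper's construction. The paper chooses, for each finite tuple $t$ of basic sets and each $x\in\bigcap t$, one sequence extending $t$ that is a neighbourhood base at $x$; your $M_0$ is this with the specific choice $t^{\frown}\gamma_x$, and your identity $f(N(a_0,\dots,a_k)\cap M_0)=\bigcap_{i\le k}B_{a_i}$ is the paper's equation (1). Using indices in $\mathbb{N}^{\mathbb{N}}$ instead of $\mathcal{B}^{\mathbb{N}}$, and passing through the full Ponomarev space before shrinking it, are only cosmetic differences.
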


\begin{proof}
Let $X$ be countable and second-countable, and let $\mathcal{B}$ be a countable base. Regard $\mathcal{B}$ as a discrete space. For a finite sequence
\[
  t=(B_{0},\ldots,B_{k-1})\in\mathcal{B}^{<\mathbb{N}},
\]
put $\bigcap t=X$ when $k=0$ and $\bigcap t=\bigcap_{i<k}B_{i}$ otherwise.

For every pair $(x,t)$ with $x\in\bigcap t$, choose a sequence
\[
  b(x,t)=(B_{n}(x,t))_{n\in\mathbb{N}}\in\mathcal{B}^{\mathbb{N}}
\]
whose first $k$ terms are those of $t$ and whose terms form a neighborhood base at $x$. Such a sequence exists because a second-countable space is first-countable. Set
\[
  M=\{b(x,t):t\in\mathcal{B}^{<\mathbb{N}},\ x\in\bigcap t\}
  \subseteq\mathcal{B}^{\mathbb{N}}.
\]
There are only countably many pairs $(x,t)$, so $M$ is countable. The cylinder sets form a clopen base of $\mathcal{B}^{\mathbb{N}}$; hence $M$ is zero-dimensional and metrizable. For example, the product is metrized by
\[
  d(a,b)=
  \begin{cases}
    0, & a=b,\\
    2^{-\min\{n~\!\!:~\!\!a_{n}\neq b_{n}\}}, & a\neq b.
  \end{cases}
\]

A sequence in $M$ cannot be a neighborhood base at two distinct points. If it were a base at $x\neq y$, then every term would contain both points. The $T_{1}$ open neighborhood $X\setminus\{y\}$ of $x$ would contain one of those terms, a contradiction. We may therefore define
\[
  p:M\to X,~ p(b(x,t))=x.
\]
Taking $t$ to be the empty sequence shows that $p$ is surjective.

For $t=(B_{0},\ldots,B_{k-1})$, let
\[
  [t]=\{a\in\mathcal{B}^{\mathbb{N}}:a_{i}=B_{i}\text{ for }i<k\}.
\]
Then
\[
  p(M\cap[t])=\bigcap t. \tag{1}
\]
The inclusion from left to right holds because every term of a sequence assigned to $x$ is a neighborhood of $x$. The reverse inclusion follows from the presence of $b(x,t)$ for every $x\in\bigcap t$. The sets $M\cap[t]$ form a base of $M$, and (1) shows that $p$ is open.

Finally, let $a=(A_{n})_{n\in\mathbb{N}}\in M$, let $p(a)=x$, and let $U$ be an open neighborhood of $x$. Since $(A_{n})$ is a neighborhood base at $x$, there is an $n$ with $A_{n}\subseteq U$. For $t=(A_{0},\ldots,A_{n})$, equation (1) gives
\[
  p(M\cap[t])=\bigcap_{i\leq n}A_{i}\subseteq A_{n}\subseteq U.
\]
Thus $p$ is continuous.
\end{proof}

\section{The countable metric counterexample}

We now extract the precise input needed from the construction of He and Zhao.

\begin{proposition}\label{prop:HZ-input}
There exists a countable second-countable $T_{1}$-space $X$ such that $\Sigma\OO(X)$ is not sober.
\end{proposition}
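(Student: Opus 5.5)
The plan is to take the countable $T_{1}$ space $X$ of He and Zhao \cite{HeZhao2026}, whose lattice of open sets $\OO(X)$ has a non-sober Scott topology, and verify the one property the paper adds: that its topology has a countable subbase, equivalently a countable base. Countability and the $T_{1}$ property are part of their statement, and non-sobriety of $\Sigma\OO(X)$ is their main theorem, so these will simply be cited.

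The first step is to recall the He--Zhao space explicitly. As I expect, the underlying set is countable, say built from $\mathbb{N}$ or $\mathbb{N}\times\mathbb{N}$ together with countably many extra points. The topology is then described by prescribing, for each point, a family of basic neighbourhoods, e.g.\ cofinite-type sets or sets indexed by natural numbers. I will read off from their definition a generating family $\mathcal{S}$ of open sets, one for which their topology is by definition the one generated by $\mathcal{S}$.

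The second step is to check that $\mathcal{S}$ is countable, or can be replaced by a countable subfamily generating the same topology. If the generating sets are indexed by points together with natural-number parameters, or by finite subsets of a countable set, countability is immediate. The finite intersections of members of $\mathcal{S}$ then form a countable base, so $X$ is second-countable.

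The main obstacle is the case where the He--Zhao topology is presented not via a countable subbase but, for instance, as all sets satisfying some closure condition, such as ``cofinite along certain sequences''. A topology of that kind could a priori have uncountably many basic open sets. In that case I would first try to show that every open set is a union of members of a specific countable family. Concretely, for each point $x$ and each $n$, I would take the smallest open set containing $x$ and avoiding the first $n$ points of some enumeration, provided such smallest sets exist and are open. Since $X$ is countable, second countability of $X$ is equivalent to every point having a countable neighbourhood base. So it suffices to exhibit a countable local base at each point from their description, and then take the union of these local bases, which is countable.
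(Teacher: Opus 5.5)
Your citation step is acceptable in substance, but be precise about what is being cited: He and Zhao do not state non-sobriety of $\Sigma\OO(X)$ as their theorem. The paper obtains it by combining two of their results. Their Corollary~4.13 gives a countable $T_1$ space that is compact but not $SI$-compact. Their Lemma~3.5 says that if $\Sigma\OO(X)$ is sober, then compactness and $SI$-compactness coincide.

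\textbf{The gap: second countability is never proved.} This is the only part of the proposition that is not a citation. Your proposal never identifies the He--Zhao topology, so it proves nothing about it. It is a template that succeeds only if the topology turns out to be countably generated, and it gives no argument that it is.

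The missing observation is concrete. Their space is a countable poset $\widehat P$. Its topology is generated by the lower topology $\omega(\widehat P)$ together with the cofinite subsets of $\widehat P$. The lower topology has the subbase $\{\widehat P\setminus\mathord{\uparrow}x : x\in\widehat P\}$, which is indexed by the countable set $\widehat P$. A countable set has only countably many cofinite subsets. Hence the union of the two families is a countable subbase, and its finite intersections form a countable base.

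\textbf{Your fallback would not rescue the argument.} First, a countable $T_1$ space need not be first countable. For instance, take $\mathbb{N}\cup\{p\}$, where the neighbourhoods of $p$ are the sets $A\cup\{p\}$ with $A$ in a fixed free ultrafilter. So countability of $X$ by itself gives no countable local bases.

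Second, the particular sets you propose do not exist. In a $T_1$ space, let $F$ be finite and $x\notin F$. Each $X\setminus(F\cup\{y\})$ with $y\neq x$ is an open set containing $x$ and missing $F$. So the intersection of all such open sets is $\{x\}$, and a smallest one exists only when $x$ is isolated.

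Second countability therefore has to be read off from the explicit generating family of the He--Zhao topology, as described above.
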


\begin{proof}
Corollary 4.13 of \cite{HeZhao2026} gives a countable $T_{1}$-space that is compact but not $SI$-compact. Lemma 3.5 of the same paper states that, if $\Sigma\OO(X)$ is sober, then compactness and $SI$-compactness are equivalent. Hence the Scott space $\Sigma\OO(X)$ of their example is not sober.

It remains only to record a countability property of their topology. The underlying poset $\widehat P$ is countable. Its lower topology is generated by the countable subbase
\[
  \{\widehat P\setminus\mathord{\uparrow}x:x\in\widehat P\}.
\]
The topology in Corollary 4.13 is generated by this lower topology together with the co-finite subsets of $\widehat P$. There are only countably many co-finite subsets of a countable set. The resulting topology therefore has a countable subbase, and is second-countable.
\end{proof}

\begin{theorem}\label{thm:main}
There exists a countable zero-dimensional metrizable space $M$ such that
\[
  \Sigma\OO(M)\text{ is not sober}
  ~\text{and}~
  \PH(M)\text{ is not co-sober}.
\]
\end{theorem}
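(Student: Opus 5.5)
The plan is to assemble the three reductions already established. First, I will invoke Proposition~\ref{prop:HZ-input} to fix a countable second-countable $T_{1}$-space $X$ for which $\Sigma\OO(X)$ is not sober. Next, Theorem~\ref{theor:countable-ponomarev} applies to $X$ and produces a countable zero-dimensional metrizable space $M$ together with an open continuous surjection $p:M\to X$. Then Corollary~\ref{cor:transfer-nonsober}, which rests on the retraction $r\circ s=id_{\OO(X)}$ of Lemma~\ref{lem:open-retract} and on Lemma~\ref{lem:retract-sober}, gives that $\Sigma\OO(M)$ is not sober. This is the first assertion.

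For the second assertion, I note that $M$ is metrizable, hence Hausdorff and in particular $T_{0}$, so Theorem~\ref{theor:bridge} applies to $Y=M$. It yields that $\PH(M)$ is co-sober if and only if $\Sigma\OO(M)$ is sober. Since the latter fails, $\PH(M)$ is not co-sober.

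I would also add the remark that $M$ itself is co-sober by Lemma~\ref{lem:T2-co-sober}, since it is Hausdorff. This shows that $M$ answers Xu's questions: a metric, $T_{2}$, co-sober space whose Hoare power space is not co-sober. It likewise answers He and Zhao's problem, since $M$ is a countable Hausdorff space with $\Sigma\OO(M)$ not sober.

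No step here is a genuine obstacle; the proof is a composition of earlier results. The only point needing care is checking the hypotheses of each cited result. Theorem~\ref{theor:countable-ponomarev} needs the $T_{1}$ axiom and second countability of $X$, and both are supplied by Proposition~\ref{prop:HZ-input}. Lemma~\ref{lem:open-retract} needs $p$ to be an open, continuous and surjective map, and all three properties are part of the output of Theorem~\ref{theor:countable-ponomarev}.
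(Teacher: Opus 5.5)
Your proposal is correct and follows the paper's proof step for step. You use Proposition~\ref{prop:HZ-input}, then Theorem~\ref{theor:countable-ponomarev}, then Corollary~\ref{cor:transfer-nonsober} to get that $\Sigma\OO(M)$ is not sober, and finally Theorem~\ref{theor:bridge}, whose $T_{0}$ hypothesis you rightly check via metrizability, to get that $\PH(M)$ is not co-sober; your remark that $M$ is co-sober by Lemma~\ref{lem:T2-co-sober} is not needed for the theorem but correctly explains why $M$ answers Xu's questions.
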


\begin{proof}
Choose $X$ as in Proposition~\ref{prop:HZ-input}. By Theorem~\ref{theor:countable-ponomarev}, there are a countable zero-dimensional metrizable space $M$ and an open continuous surjection $p:M\to X$. Corollary~\ref{cor:transfer-nonsober} shows that $\Sigma\OO(M)$ is not sober. Theorem~\ref{theor:bridge} then gives that $\PH(M)$ is not co-sober.
\end{proof}

\section{Concluding remarks}

The proof combines two transfer steps. An open continuous surjection transfers non-sobriety of the Scott space of the lattice of open sets from the He--Zhao space to a countable metrizable domain. Theorem~\ref{theor:bridge} then converts that non-sobriety into the failure of co-sobriety for the Hoare power space. The resulting example is countable and zero-dimensional.

He and Zhao also asked whether $\Sigma\OO(\mathbb{Q})$ is sober for the rational line $\mathbb{Q}$. The construction above does not decide that more specific question. By Theorem~\ref{theor:bridge}, it is equivalent to asking whether $\PH(\mathbb{Q})$ is co-sober.






\begin{thebibliography}{99}

\bibitem{Abernethy1974}
K. Abernethy, On characterizing certain classes of first countable spaces by open mappings,
Pac. J. Math. 53 (1974), 319--326.

\bibitem{Abramsky-Jung-1994}
S. Abramsky, A. Jung, Domain theory, in: Semantic Structures, Handbook of Logic in Computer Science, vol.3, Clarendon Press, 1994, pp.1-168.




\bibitem{ELS2004}
M. Escard\'o, J. D. Lawson, A. Simpson,
Comparing Cartesian closed categories of (core) compactly generated spaces,
\emph{Topology Appl.} 143 (2004), 105--145.


\bibitem{Erne2019}
M. Ern\'e, Web spaces and worldwide web spaces: topological aspects of domain theory,
Log. Meth. Comput. Sci. 15(1) (2019), 23:1--23:38.

\bibitem{GHKLMS-2003}G. Gierz, K. Hofmann, K. Keimel, J.D. Lawson, M. Mislove, D. Scott, Continuous Lattices and Domains, Encyclopedia of Mathematics and its Applications 93. Cambridge University Press, 2003.


\bibitem{HeZhao2026}
Z. He, B. Zhao, A new method for constructing a compact and non SI-compact space, 2026,
arXiv:2608.01048.

\bibitem{Isbell-1982}
J. Isbell, Completion of a construction of Johnstone, Proceedings of the American Mathematical Society, 85 (1982) 333-334.


\bibitem{Schalk1993}
A. Schalk, Algebras for Generalized Power Constructions, PhD thesis, Technische Hochschule Darmstadt, 1993.

\bibitem{ShenWuXiZhao2020}
C. Shen, G. Wu, X. Xi and D. Zhao, Sober Scott spaces are not always co-sober,
Topol. Appl. 282 (2020) 107316.

\bibitem{WenXu2018}
X. Wen, X. Xu, Sober is not always co-sober,
Topol. Appl. 250 (2018), 48--52.



\bibitem{ZhaoHeWang2023}
B. Zhao, Z. He, K. Wang, On co-sober spaces,
Houst. J. Math. 49(4) (2023), 971--988.

\end{thebibliography}
\end{document}